\long\def\skipit#1{} 
\def\={\,=\,}
\def\+{\,+\,}







\documentclass[french,10pt]{amsart}
\topmargin 0 pt \textheight 46\baselineskip \advance\textheight by \topskip
\setlength{\parskip}{3pt plus 1pt minus 1pt}  
\setlength{\textwidth}{140mm}
\setlength{\oddsidemargin}{5.6mm} \setlength{\evensidemargin}{5.6mm}
\textheight=8in  
\footskip=30pt

\usepackage[utf8]{inputenc}
\usepackage[T1]{fontenc}
\usepackage{pstricks-add}
\usepackage{amsfonts}
\usepackage{amsmath}
\usepackage{amssymb}
\usepackage{ifthen,calc}
\usepackage{color}
\usepackage{epsfig}
\usepackage{graphicx}
\usepackage{epstopdf}
\usepackage{epsfig}
\usepackage{latexsym}
\usepackage{multicol}
\usepackage[sc]{mathpazo}
\linespread{1.05}
\usepackage[format=hang, margin=10pt]{caption}
\usepackage[mathscr]{eucal}
\usepackage{mathtools}
 \usepackage[utf8]{inputenc}


\newcounter{hours}
\newcounter{minutes}
\newcommand{\printtime}{
	\setcounter{hours}{\time/60}%
	\setcounter{minutes}{\time-\value{hours}*60}
	\ifthenelse{\value{hours}<10}{0}{}\thehours:%
	\ifthenelse{\value{minutes}<10}{0}{}\theminutes}

\parskip3pt  

\pagestyle{myheadings}
\markright{\centerline{Satins and Twills}}

\numberwithin{equation}{section}
\numberwithin{figure}{section}
\numberwithin{table}{section}

\newtheorem{thm}{Theorem}[section]

\newtheorem{lemma}[thm]{Lemma}

\newtheorem{J-com}{JG-comment}[section]
\theoremstyle{definition}
\newtheorem{example}{Example}[section]

\newtheorem{rem}[thm]{Remark}




\def\dsum{\displaystyle\sum}

\keywords{Embedding, Exterior permutation, Interior permutation, Signed graph, Excepted genus}

\subjclass{Primary: 05C10}
\begin{document}

\title {The average Euler-genus of the vertex-amalgamation
of signed graphs}

{
\author{Yichao Chen}
\address{School of Mathematics, SuZhou University of Science and Technology, Suzhou, 215009, China}
\email{chengraph@163.com}
}  


\vskip.51cm
\noindent {Version: \printtime\quad\today\quad}
\vskip.51cm

\begin{abstract}
In this paper, 
 we first generalize a theorem for counting the number of faces of an oriented embedding of a graph that passing through a given cut-edge set [S. Stahl, Trans. Amer. Math. Soc. 259 (1980), 129--145] to all surfaces. 
Then we extend Stahl's bounds for the average genus of the vertex-amalgamation of graphs [S. Stahl, Discrete Math. 142 (1995), 235--245] to signed graphs.

\end{abstract}

\maketitle  


\section{Introduction}

A graph $G$ is often permitted to have both loops and multiple edges in graph embeddings. Since the genus is a homeomorphic invariant, we  suppose the graphs in the paper are simple. It is well known that all closed surfaces are classified into the \textit{{orientable surfaces}} $O_i$, with $i$ handles $(i\ge0)$, and the \textit{{non-orientable surfaces}} $N_j$, with $j$ crosscaps $(j>0)$. We will use $S$ to denote a surface without regard to orientability. An \textit{embedding} of  $G$ into a closed surface $S$ is a \textit{{cellular} embedding}. If a graph $G$ has an embedding in a surface $S$ with
$n$ vertices, $m$ edges and $r$ faces, the following classical \textit{ Euler's formula }tell us that $$n-m+r=2-\gamma^E
,$$

\noindent where $\gamma^E$ is the \textit{Euler-genus} of the surface $S$ and $$\gamma^E=\left\{
         \begin{array}{ll}
           2k, & \hbox{If $S=O_k$;} \\
           h, & \hbox{If $S=N_h$.}
         \end{array}
       \right.$$

 A \textit{rotation at a vertex} $v$ of a graph $G$ is a cyclic ordering of the edge-ends incident at $v$.  A\textit{ rotation system} $R$ of a graph $G$ is an assignment of a rotation at every vertex of $G$. There is an one-to-one correspondence between the orientable embeddings and rotation systems.

An \textit{embedding} of a graph $G$ on an arbitrary surface $S$ can be described combinatorially by a \textit{signed rotation system} (or
 a general rotation system) $(R,\sigma),$ where $R$ is a rotation system of $G$ and $\sigma$ is the \textit{twist-indicator}, i.e., if $\sigma(e)=-1$, then the edge $e$ is twisted; otherwise $\sigma(e)=1$ and $e$ is untwisted.  A \textit{switch} of a signed rotation system at a vertex $v$ means reversing the rotation about $v$ and changing the sign on each of its incident edges. Two signed rotation system are \textit{equivalent} if one can be transformed into another by a sequence of switches. For terms not defined in this paper, we refer to \cite{GT87}. 
 
 A \textit{permutation-partition pair} $(P,\Pi)$ consists of an arbitrary permutation $P$ and an arbitrary partition $\Pi,$ both defined over some common underlying set $S.$ It is a combinatorial generalization of  graphs and graph embeddings. The notion of a permutation-partition pair was  introduced by Stahl in \cite{Sta80}.  It was also independently rediscovered in \cite{Arc86a,Arc86b}, called IDS by Archdeacon. For undefined
 terminologies on $(P,\Pi)$, see \cite{Sta91c}.

A \textit{signed graph} $\Sigma=(G,\sigma)$ is a graph $G$ together with a mapping $\sigma$ which assigns $+1$ or $-1$ to each edge of the graph $G.$ If $\sigma(e)=1,$ we call the edge \textit{positive}, otherwise $\sigma(e)=-1$ and the edge is \textit{negative}. 
An { embedding} of a signed graph $\Sigma=(G,\sigma)$ can be described combinatorially by a \textit{signed rotation system} $(R,\sigma),$ where $R$ is the rotation system of $G,$ and $\sigma$ can be thought of as the twist-indicator. There are many researches on signed graph embeddings, see \cite{Che22,L15,LY18,Sir91a,Sir91b,SS91,Zas92,Zas93,Zas96a,Zas97b}, etc.

Two embeddings $(R_1,\sigma),$ and $(R_2,\sigma)$ of $\Sigma=(G,\sigma)$ are {equivalent} if their rotation system $R_1$ and $R_2$ are the same. Let the \textit{sample space} $\Omega$ consist of all inequivalent  embeddings $\Sigma$. The \textit{Euler-genus random variable} $\gamma^E$ is define as $$\gamma^E: \Omega\longrightarrow \{0,1,\ldots,\}.$$ If $(R,\sigma)$ embeds $\Sigma$ on $S_i$, then $\gamma^{E}(R,\sigma)=i.$ The \textit{average Euler-genus} $\gamma_{avg}^E (\Sigma)$ of a signed graph $\Sigma$ is the
expected value of the Euler-genus random variable, over all labeled
embeddings of $\Sigma,$ using the uniform distribution $$P(\Sigma)=\frac{1}{\prod_{v\in V(G)}(d_v-1)!},$$

i.e., \begin{eqnarray}\label{defini}\gamma_{avg}^E (\Sigma)=\frac{\dsum_{R \in \Omega}\gamma^{E}(R,\sigma)}{\prod_{v\in V(G)}(d_v-1)!}.\end{eqnarray}

Note that the average genus of a single graph  was introduced in \cite{GF87} by
Gross and Furst  and was further developed in \cite{GKR93,CG92a,CG92b,CG93,CGR95}. Recently, Loth, Halasz, Masa\v{r}\'{\i}k, Mohar, and \v{S}\'{a}mal \cite{LHMMS22} and  Loth and Mohar \cite{LM22} obtained tight lower bounds for the average genus. The author proved that nonorientable average genus of a random graph is close to its maximum nonorientable genus \cite{Che22b}.
  We may refer to \cite{Che10,Sta91b,Sta95b} for more topics on average genus.


Given an oriented embedding of a graph $G$, Stahl \cite{Sta76} proved a local counting theorem that provided an expression for the number of distinct faces which contain a specific vertex on their boundary in terms of the orbits of the product of two permutations. The two permutations were derived from Edmonds' rotation systems. He then used this expression to give new proofs of several classical theorems in topological graph theory, such as the Duke's interpolation theorem and  the additivity  theorem for the genus of Battle, Harary, Kodama, and Youngs. The local counting theorem was also independently rediscovered, in a more geometrical form, in \cite{GMTW16,LHMMS22}, etc. For other works related to the local counting theorem, see \cite{Sta80,Sta82,Sta91b,CGM18}, etc.

Let $U$ be a subset of the vertex set $V(G)$ of a graph $G$.  Then the {\textit{boundary} of $U$}, denoted by $\partial(U)$,  is the set of oriented edges of $G$ that emanate from a vertex of $U$ and terminate at a vertex of $V(G)-U$. 
In \cite{Sta80}, Stahl generalized the local counting theorem to the case of containing edges of $\partial U$ for an oriented embedding. Given an embedding of $G$ on an arbitrary surface $S$, here we obtain an expression for the number of distinct faces which contain the edges of $\partial U.$  Our result can be thought of as a generalization of Stahl's theorem to nonorientable embeddings. 

The paper is organized as follows. In Section 2, we  prove the counting theorem for graph embeddings. Upper and lower bounds for the average Euler-genus of the vertex-amalgamation of signed graphs is given in Section 3.


\section{The counting theorem}

\subsection{Combinatorial embeddings of graphs}  

Let $G=(V,E)$ be a graph with  vertex set $V$ and edge set $E.$ Suppose that $K=\{1,\alpha,\beta,\gamma\}$ is the \textit{Klein four-group}. Given an edge $a\in E,$ we introduce its \textit{two sides} and \textit{two ends}.  Assume $a$ itself at one end and on one side.  Let $\alpha$ be the permutation that interchanges symbols at the same end but different sides of an edge, for each edge. Let $\beta$ be the permutation that interchanges symbols at the same side but different ends of an edge, for each edge.  We call the set $Ka=\{a,\alpha a, \beta a,\gamma a \}$  a \textit{quadricell}.

An embedding $(R,\sigma)$ of $G$ on some surfaces induces a \textit{bi-rotation system} $P$ on $V$ such that at each vertex, all its incident semi-edges are with a cyclic order, called a \textit{bi-rotation}. Since each edge is considered as a quadricell, the bi-rotation at a vertex $v$ is $$\left\{(a,Pa,\ldots,P^{(m-1)}a),(\alpha a,\alpha P^{-1} a,\ldots,\alpha P^{-(m-1)}a)\right\}.$$


We denote by $\langle P, \gamma \rangle$ the group of permutations $P, \gamma$ generated by them.  Tutte provided an axiomatization for any embedding of $G$, as stated in the following result of
Tutte.

\begin{def}\label{tutte}
An \textit{embedding $M$ (or a map)} is an ordered triple $(\alpha, \beta,P)$ of permutations acting on a set $S$ of $4m$ elements, such that

\begin{enumerate}
  \item  for any $a\in S,$ then $a, \alpha a, \beta a, \gamma a$ are distinct.
  \item  $\alpha P=P\alpha^{-1}$.
  \item  for each $a\in S$, the orbits of $P$ through $a$ and  $\alpha a$ are distinct.
  \item $\langle P, \gamma \rangle$ acts transitively on $S.$
\end{enumerate}
\end{def}

It's known that if the action of $\langle P,\gamma\rangle$ on $S$ is transitive, then $M$ is non-orientable, otherwise there are precisely two orbits, and $M$ is orientable. For more on Tutte's axiomatization of graph embeddings, see \cite{Liu09,Tut84}, etc.

\subsection{The counting theorem}
Suppose that the rotation at every vertex $v$ of $G$ is counter-clockwise.  Given an oriented edge $e$, we denote $e^l$ and $e^r$ as the \textit{left side} and the \textit{right side} of $e$, respectively. We also denote $e^+$ and $e^-$ as the \textit{lower end} and the \textit{upper end} of $e$, respectively. Under this convention, $e^{l_+}$, $e^{r_+}$, $e^{l_-},$ and $e^{r_-}$ denote the \textit{lower left side}, the \textit{lower right side}, the \textit{upper left side}, and the \textit{upper right side} of $e$ respectively, as shown in Figure \ref{fig:edge}. 

 \begin{figure}[h]
\centering
\includegraphics[width=1.5cm,height=2cm]{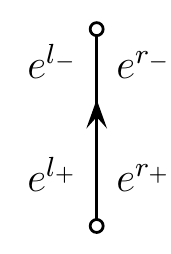}
\caption{}\label{fig:edge}
\end{figure}

Let $(R,\sigma)$ be a signed rotation system of $G$ with $\sigma(e)=1$ for $e\in \partial(U)$ and let $\iota_{(R,\sigma)}$ be the embedding of $G$ induced by $(R,\sigma)$.  Let $U\subseteq V(G)$. Any rotation system $(R,\sigma)$ of $G$  induces two permutations, \textit{exterior permutation} $Ext_{R, U}$ and \textit{interior permutation} $Int_{R, U}$ on $\partial(U)$ as follows. When we say $e^l,$ we mean its left upper end or left lower end of $e^l,$ namely, we have $e^{l}\in \{e^{l_+},e^{l_-}\}.$ Similarly, the edge $e^r$ means its right upper end or right lower end and $e^r=e^{r_+}$ or $e^{r_-}.$ For any $e\in \partial(U),$ and let $\delta\in \{+,-\}$, we define $$Ext_{R, U}(e)=\gamma[P\gamma]^{x(e)} (e^{r_{\delta}}),\ \ \ \ \  Int_{R, U}(e)=\gamma[\gamma{P}]^{y(e)} (e^{l_{\delta}})$$
where $x(e)$ and $y(e)$ are the least positive integers such that $[P\gamma]^{x(e)}(e^{r_{\delta}}) \in \{e_{s}^{l_{-}},e_{s}^{l_+}\}$ for some $e_s\in \partial(U)$, and $[\gamma P]^{x(e)} (e^{l_{\delta}})\in \{e_{t}^{r_{+}},e_{t}^{r_{-}}\}$ for some $e_t\in \partial(U)$ respectively. When $U$ consists of a vertex $u$, i.e., $U=\{u\}$, then we also denote $Ext_{R, U}$ and $Int_{R, U}$ as $Ext_{R, u}$ and $Int_{R, u}$, respectively.

We have an intuitive explanation for $Ext_{R, U}$. For each oriented edge $e$ in $\partial(U)$, let ${F}_e$ be the  face of the embedding $\iota_{(R,\sigma)}$ whose boundary contains $e$. Staring from $e^{r_{\delta}},$  then follow along the boundary of $F_e$ until the first edge $e_s^{l,-}$ or $e_s^{l,+}$  belonging to $\partial(U)$ in encountered, this edge is $Ext_{R, U}(e)$, namely, $Ext_{R, U}(e)=e_s.$


 Again, here's  an intuitive explanation for $Int_{R, U}.$ 
Staring from $e^{l_{\delta}}$, then follow along the boundary of $F_e$ until the first edge $e_{t}^{r_{+}},$ or $e_{t}^{r_{-}}$  belonging to $\partial(U)$ in encountered, this edge is $Int_{R, U}(e)$. Then $Int_{R, U}(e)=e_t$.

Here, we give a proof that $Ext_{R, U}$ is indeed a permutation of $\partial(U).$ We only need to give a proof for the case $U=\{u\}$, because we can contract $U$ into a vertex $u$ by shrinking the edges so that the resulting graph is still embedded in the same surface.

  By
definition, $Ext_{R, U}(e)= f^r$ for some $f\in \partial(U).$ 
Thus $ Ext_{R, U}(e)\in \partial(U).$
We now go to show that  $Ext_{R, U}$  is injective. If there exit two different edges $e,f\in \partial(U) $ such that $Ext_{R, U}(e)=Ext_{R, U}(f)$. By symmetry, we can assume that $x(e)$ is greater than or equal to $x(f).$ Thus $$\gamma[P\gamma]^{x(e)-x(f)-1} (e^{r_{\delta}})=P^{-1} (f^{r_{\delta}}).$$ Note that $x(e)-x(f)-1$ is less than $x(e).$

If $x(e)=x(f)$, then $$P^{-1} (f^{r_{\delta}})=P^{-1} (e^{r_{\delta}}),$$ thus $f=e$ which contradicts $e\neq f$.

If  $x(e)=x(f)+1$, then $$P\gamma(e^{r_{\delta}})=(f^{r_{\delta}}).$$ This means $e,f$ lie on the boundary of the same face and  $e, f$ are adjacent in the face, this contradicts that $G$ is a simple graph, again it's impossible.

If $x(e)-x(f)\geq 2$, then $\gamma[P\gamma]^{x(e)-x(f)-1} (e^{r_{\delta}})=P^{-1}(f^{r_{\delta}})$. If $f^{r_{\delta}}=f^{r_+},$ then the initial point of $P^{-1} (f^{r_{+}})$  is the same as that of $f$ and is therefore in $\partial(U),$ which contradicts the minimality of $x(e).$ Otherwise $\gamma[P\gamma]^{x(e)-x(f)+1} (e^{r_{-}})=P\gamma(f^{r_{-}})=Pf^{l_{+}}$, again the initial point of $P(f^{l_{+}})$  is the same as that of $f$ and is therefore in $\partial(U).$ This also implies that $x(f)\geq 2$. Thus $x(e)-x(f)+1<x(e)$, it's impossible.


In summary, $Ext_{R, U}$ is a permutation of $\partial(U).$ Similarly, we can also prove that $Int_{R, U}$ is a permutation of $\partial(U).$



Given a permutation $P$, we denote by $||P||$ the number of orbits in the permutation $P$. We have the following  counting theorem for a graph embedding.

\begin{thm} \label{Yc18}
Suppose that $ U $ is a set of vertices of a graph $G,$  and let $(R,\sigma)$ be a signed rotation system of $G$ with $\sigma(e)=1$ for any $e\in \partial(U).$ Then the number of faces of the embedding $\iota_{(R,\sigma)}$ whose boundaries contain arcs of $\partial(U)$ is $||Ext_{R,U}\circ Int_{R, U}||.$
\end{thm}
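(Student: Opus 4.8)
The plan is to realize both permutations as ``jump maps'' along face boundaries and to show that $Ext_{R,U}\circ Int_{R,U}$ is precisely the return map that advances along a face boundary from one edge of $\partial(U)$ to the next one of the same type. I will work in the quadricell model of Tutte's axiomatization, where the faces of $\iota_{(R,\sigma)}$ are the orbits of $P\gamma$ on the $4m$ flags, and a face contains an arc of $\partial(U)$ exactly when its $P\gamma$-orbit meets a flag of some edge of $\partial(U)$. As already observed before the statement, I may first contract $U$ to a single vertex $u$; this leaves the surface, the two permutations, and the quantity to be counted unchanged, so the argument reduces to counting the faces that meet the edge-ends at $u$, and the general case follows verbatim.

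First I would record the geometric content of the two maps, using crucially that $\sigma(e)=1$ for every $e\in\partial(U)$. Because these edges are untwisted, their two sides can be labelled \emph{left} and \emph{right} coherently along the whole edge, so the flags $e^{l_{\pm}}$ (resp. $e^{r_{\pm}}$) genuinely lie on one side (resp. the other), and $Ext_{R,U}(e)$, $Int_{R,U}(e)$ do not depend on the auxiliary choice of $\delta$. The edges of $\partial(U)$ cut the boundary walk of each face into arcs, and I would show that $Int_{R,U}$ follows one such arc from the left side of $e$ to the first right side $e_t^{r_{\pm}}$ encountered, while $Ext_{R,U}$ follows the complementary arc from the right side of $e$ to the first left side $e_s^{l_{\pm}}$ encountered. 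In the language of crossings of the cut $\partial(U)$, this says that $Int_{R,U}$ sends each ``left crossing'' to the next ``right crossing'', and $Ext_{R,U}$ sends each ``right crossing'' to the next ``left crossing''.

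Next I would concatenate the two half-steps. The trace defining $Int_{R,U}(e)=e_t$ terminates at a right-side flag $e_t^{r_{\pm}}$, whereas $Ext_{R,U}$ applied to $e_t$ departs from $e_t^{r_{\delta}}$; choosing $\delta$ to match, the $P\gamma$-trace of $Ext_{R,U}$ resumes exactly where the $\gamma P$-trace of $Int_{R,U}$ stopped, so that one application of $Ext_{R,U}\circ Int_{R,U}$ follows a single connected stretch of one face boundary, from the left side of $e$ across to $e_t$ and then on to the left side of $e_s$. Iterating, the orbit of $e$ under $Ext_{R,U}\circ Int_{R,U}$ is exactly the cyclic list of edges of $\partial(U)$ whose left side lies on the boundary of the common face $F_e$, each appearing once since a cellular face-boundary walk uses every flag exactly once. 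Conversely, a face boundary is a closed walk that lies alternately in $U$ and in $V(G)-U$ between consecutive cut edges, so its crossings are even in number and alternate between left and right type; hence every face meeting $\partial(U)$ carries at least one left crossing and yields exactly one orbit, while faces disjoint from $\partial(U)$ yield none. This gives a bijection between the orbits of $Ext_{R,U}\circ Int_{R,U}$ and the faces whose boundary contains an arc of $\partial(U)$, so their number is $\|Ext_{R,U}\circ Int_{R,U}\|$.

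The main obstacle I expect is the directional bookkeeping that makes the two half-traces concatenate correctly. One must check that the operation $\gamma P$ driving $Int_{R,U}$ and the operation $P\gamma$ driving $Ext_{R,U}$ are conjugate by the involution $\gamma$ (which interchanges left and right flags), and that under the hypothesis $\sigma(e)=1$ this conjugation makes the two walks agree on the shared edge $e_t$ and proceed in a consistent direction around $F_e$, rather than reversing or crossing over to the face on the opposite side. Equivalently, the heart of the matter is the alternation of left and right crossings along each face boundary, which is exactly the point where untwistedness of $\partial(U)$ is indispensable; once that alternation is secured, the well-definedness, injectivity and surjectivity of the orbit-to-face correspondence are routine.
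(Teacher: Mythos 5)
Your overall strategy --- realizing $Ext_{R,U}$ and $Int_{R,U}$ as half-traces along face boundaries and identifying $Ext_{R,U}\circ Int_{R,U}$ with the first-return map of the face permutation $P\gamma$ to the flags over $\partial(U)$ --- is the same strategy as the paper's, which implements it algebraically: the splicing identity $\gamma[\gamma P]^{x}\gamma(P\gamma)^{y}=[P\gamma]^{x+y}$ together with a minimality claim proved by conjugating with $\gamma$. However, your execution has a genuine gap: the alternation claim on which your bijection rests is false in the generality of the theorem. You assert that along each face boundary the crossings of $\partial(U)$ alternate between left-side and right-side crossings, and you deduce this from the fact that the walk alternates between $U$ and $V(G)-U$. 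What that fact actually gives is alternation between \emph{exiting} and \emph{entering} crossings (the end data $+/-$), not between the two \emph{sides} ($l/r$). The two dichotomies coincide only if the walk preserves handedness between consecutive crossings, that is, only if it traverses an even number of twisted edges in between. The hypothesis $\sigma(e)=1$ is imposed only on $\partial(U)$; edges outside the cut may be twisted, and that is precisely the new case this theorem covers beyond Stahl's orientable version. The paper's own first example (the graph of Figure \ref{fig:B3}, with the edge $v_3v_4$ twisted) exhibits the failure: in the face orbit $(1^{r_+}7^{r_+}2^{l_-}3^{r_+}8^{r_+}4^{r_-}3^{l_+}8^{l_+}4^{l_-}5^{r_+}9^{r_+}6^{l_-})$ the consecutive cut crossings $3^{r_+}$ and $4^{r_-}$ are both right-side flags (the twisted edge $8$ is traversed between them), and likewise $3^{l_+}$ and $4^{l_-}$ are consecutive left-side crossings.

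Because of this, the two consequences you draw from alternation also fail. First, $\delta$-independence does not follow from untwistedness of the cut edges: the flags $e^{r_+}$ and $e^{r_-}$ lie in the two mirror $P\gamma$-orbits of the face bordering the right side of $e$, so the traces from them run around that face in opposite directions, and the first left-side cut flag met forwards is in general not the one met backwards. Second, once side-alternation fails, the half-traces as you defined them can skip crossings (a trace stopping only at right-side flags passes over intervening left-side crossings), so the orbits of the composed map need not correspond to faces at all. Your closing paragraph correctly identifies side-alternation as the heart of the matter but locates the needed hypothesis in the wrong place: untwistedness of $\partial(U)$ makes ``left'' and ``right'' well defined on the cut edges themselves, whereas the alternation would require untwistedness of the edges traversed \emph{between} crossings, which is not assumed. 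A repair must either replace the left/right dichotomy by the entering/exiting dichotomy in the tracing rules, or, as the paper's proof does, splice the two half-traces purely algebraically via the $\gamma$-conjugation identity and prove minimality of the exponent directly, with no appeal to side-alternation; in either case the passage from flags back to the edges of $\partial(U)$ still needs care, since each face corresponds to a mirror pair of $P\gamma$-orbits.
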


\begin{proof}Note that $||Int_{R, U}\circ Ext_{R, U}||=||Ext_{R, U}\circ Int_{R, U}||,$ we prove that the number of faces of the embedding $\iota_{(R,\sigma)}$ whose oriented boundaries contain arcs of $\partial(U)$ is $||Int_{R, U}\circ Ext_{R, U}||.$  Let $e=Ext_{R, U}(d),$ and let $f=Int_{R,U}(e)$. Assume $Int_{R, U}(e)=\gamma [\gamma P]^{x(e)}(e^{l_{\delta}}).$ Then
$$Int_{R, U}\circ Ext_{R, U}(d)=\gamma [\gamma P]^{x(e)}\gamma(P\gamma)^{y(d)}(d^{r_{\delta}})=[P\gamma]^{x(e)+y(d)}(d^{r_{\delta}}).$$
Since $d$ and $[P\gamma]^{x(e)+y(d)}(d^{r_{\delta}})$ belong to $\partial(U).$ We have the following claim.

\textbf{Claim:} $x(e)+y(d)$ is the least such integer that $[P\gamma]^{x(e)+y(d)}(d^{r_{\delta}})= f^l$ for $f\in \partial(U).$

Suppose that $[P\gamma]^m(d^{r_{\delta}})= g^{l}$ for some $g\in \partial(U).$ We have $\gamma[P\gamma]^m(d^{r_{\delta}})= g^{r},$ namely,
$$[\gamma P]^{m-y(d)}\gamma[P\gamma]^{y(d)}(d^{r_{\delta}})= g^{r},$$ for some $g\in \partial(U).$

Since $e=Ext_{R, U}(d)=\gamma[P\gamma]^{y(d)}(d^{r_{\delta}})$, hence we have $[\gamma P]^{m-y(d)}(e)= g^{r},$ for some $g\in \partial(U).$ This implies that $m-y(d)\geq x(e),$ i.e., $m\geq x(e)+y(d).$

By the claim, the orbits of
$Ext_{R, U}\circ Int_{R, U}$ are obtained from those of the faces $P\gamma$ of the embedding $\iota_{(R,\sigma)}$ by
deleting all the edges not in $\partial(U).$ The theorem follows.
\end{proof}

\begin{rem}Another way to define $Ext_{R, U}$ and $Int_{R, U}$ are as follows: $$Ext_{R, U}(e)=\gamma [\gamma P]^{x(e)}(e^{l_{\delta}}),\ \ \ \ \  Int_{R, U}(e)=\gamma[P\gamma]^{x(e)} (e^{r_{\delta}})$$
where $x(e)$ and $y(e)$ are the least positive integers such that $[\gamma P]^{x(e)}(e^{l_{\delta}}) \in \{e_{s}^{r_{-}},e_{s}^{r_+}\}$ for some $e_s\in \partial(U)$, and $[P\gamma]^{x(e)}(e^{r_{\delta}})\in \{e_{t}^{l_{+}},e_{t}^{l_{-}}\}$ for some $e_t\in \partial(U)$ respectively.
\end{rem}

Let's illustrate Theorem \ref{Yc18} with two examples.

\begin{example}Let $G=(V,E)$ be the graph of Figure \ref{fig:B3} with the vertex set $V(G)=\{u,v_1,v_2,v_3,v_4,v_5,v_6\}$. \begin{figure}[h]
\centering
\includegraphics[width=4cm,height=3cm]{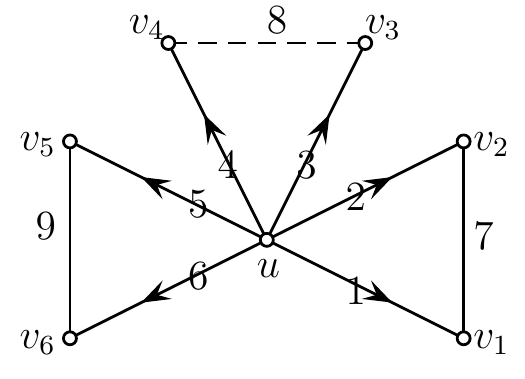}
\caption{}\label{fig:B3}
\end{figure}
Suppose that the rotation system of $G$ at every vertex  is counter-clockwise, i.e., the vertex rotation of $G$ at $u$ is $(123456).$ Let $U=\{u\}$ and $\partial(U)=\{1,2,3,4,5,6\}.$  All the edges of $G$ are untwisted except the edge $v_3v_4,$ i.e., $\sigma(v_3v_4)=-1$ and $\sigma(e)=1,$ for $e\in$ $E(G)\setminus v_3v_4.$

We let

$$\alpha=\prod_{i=1}^{9}(i^{l_+}i^{r_+})(i^{l_-}i^{r_-}),\ \ \beta=\prod_{j=1}^{9}(j^{l_+}j^{r_+})(j^{l_-}j^{r_-}),$$
and
\begin{align*}
P=&(1^{r_+}2^{r_+}3^{r_+}4^{r_+}5^{r_+}6^{r_+})(6^{l_+}5^{l_+}4^{l_+}3^{l_+}2^{l_+}1^{l_+})(7^{r_+}1^{l_-})
(7^{l_+}1^{r_-})(7^{l_-}2^{l_-})(7^{r_-}2^{r_-})(8^{r_+}3^{l_-})\\&(8^{l_+}3^{r_-})(8^{l_-}4^{r_-})(8^{r_-}4^{l_-})
(9^{r_+}5^{l_-})(9^{l_+}5^{r_-})(9^{l_-}6^{l_-})(9^{r_-}6^{r_-}).
\end{align*}
Also, we compute the number of faces of the embedding, we get
\begin{align*}
P\alpha\beta=&(1^{r_+}7^{r_+}2^{l_-}3^{r_+}8^{r_+}4^{r_-}3^{l_+}8^{l_+}4^{l_-}5^{r_+}9^{r_+}6^{l_-})
(1^{r_-}6^{l_+}9^{r_-}5^{r_-}4^{l_+}8^{l_-}3^{l_-}4^{r_+}8^{r_-}3^{r_-}2^{l_+}7^{r_-})\\
&(6^{r_+}9^{l_-}5^{l_-})(6^{r_-}5^{l_+}9^{l_+})(2^{r_+}7^{l_-}1^{l_-})(2^{r_-}1^{l_+}7^{l_+})\\
=&(1^{r_+}\cdots2^{l_-}3^{r_+}\cdots4^{r_-}3^{l_+}\cdots4^{l_-}5^{r_+}\cdots6^{l_-})(6^{l_+}\cdots5^{r_-}4^{l_+}\cdots3^{l_-}4^{r_+}\cdots3^{r_-}2^{l_+}\cdots1^{r_-})\\
&(6^{r_+}\cdots5^{l_-})(5^{l_+}\cdots6^{r_-})(2^{r_+}\cdots1^{l_-})(1^{l_+}\cdots2^{r_-}).
\end{align*}
For exterior permutation, if we start from  the edge $1^{r_{+}}$ in $$(1^{r_+}\cdots2^{l_-}3^{r_+}\cdots4^{r_-}3^{l_+}\cdots4^{l_-}5^{r_+}\cdots6^{l_-}),$$ we get
 $$1\rightarrow 2,\ 3\rightarrow3,\ 4\rightarrow 4,\ 5\rightarrow 6$$ Then by $(6^{r_+}\cdots5^{l_-})$ and $(2^{r_+}\cdots1^{l_-})$, we get
 $$ 6\rightarrow 5,\ 2\rightarrow1,$$
 Thus $$Ext_{R, U}=(12)(3)(4)(56),$$ and the interior permutation is $$Int_{R, U}=(654321).$$ Thus, $$Ext_{R, U}\circ Int_{R, U}=(12)(3)(4)(56)(654321)=(2)(6)(4315).$$


\begin{rem}
The orbit $(2)$ of $(2)(6)(4315)$ represents the face $$(2^{r_+}\cdots1^{l_-})(1^{l_+}\cdots2^{r_-}).$$ Similarly, the orbits $(6)$ and $(4315)$ represent faces $$(6^{r_+}\cdots5^{l_-})(5^{l_+}\cdots6^{r_-})$$ and $$(1^{r_+}\cdots2^{l_-}3^{r_+}\cdots4^{r_-}3^{l_+}\cdots4^{l_-}5^{r_+}\cdots6^{l_-})(6^{l_+}\cdots5^{r_-}4^{l_+}\cdots3^{l_-}4^{r_+}\cdots3^{r_-}2^{l_+}\cdots1^{r_-})$$respectively.
\end{rem}

If we swap the edges $2$ and  $3$ in the permutation $R$ and other bits of $R$ inherit completely, we denote the resulting permutation as $R^{'}$, namely, we let \begin{align*}P^{'}=&(1^{r_+}3^{r_+}2^{r_+}4^{r_+}5^{r_+}6^{r_+})(6^{l_+}5^{l_+}4^{l_+}2^{l_+}3^{l_+}1^{l_+})(7^{r_+}1^{l_-})(7^{l_+}1^{r_-})(7^{l_-}2^{l_-})(7^{r_-}2^{r_-})\\
&(8^{r_+}3^{l_-})(8^{l_+}3^{r_-})(8^{l_-}4^{r_-})(8^{r_-}4^{l_-})
(9^{r_+}5^{l_-})(9^{l_+}5^{r_-})(9^{l_-}6^{l_-})(9^{r_-}6^{r_-}).
\end{align*}
If we start from  the edge $3^{r}$, then $$Ext_{R^{'},U}=(321465)$$ and the interior permutation is $$Int_{R^{'},U}=(654231).$$ We have $$Ext_{R^{'},U}\circ Int_{R^{'},U}=(63415)(2).$$

\end{example}

\begin{example}Let $G=(V,E)$ be the graph of Figure \ref{fig:H6} with the vertex set $V(G)=\{u,v,w,x,y,z\}$. \begin{figure}[h]
\centering
\includegraphics[width=5cm,height=3.5cm]{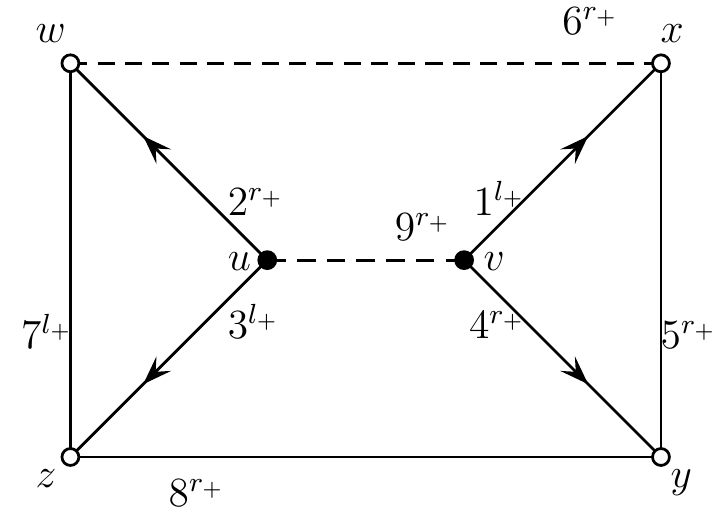}
\caption{}\label{fig:H6}
\end{figure}
Suppose that the rotation system of $G$ at every vertex  is counter-clockwise. 
Let $U=\{u,v\}$ and $\partial(U)=\{1,2,3,4\}.$  All the edges of $G$ are untwisted except the edges $uv,$ and $wx,$ i.e., $\sigma(uv)=\sigma(wx)=-1$ and $\sigma(e)=1,$ for $e\in$ $E(G)\setminus \{uv,wx\}.$ Let

$$\alpha=\prod_{i=1}^{9}(i^{l_+}i^{r_+})(i^{l_-}i^{r_-}),\ \ \beta=\prod_{j=1}^{9}(j^{l_+}j^{r_+})(j^{l_-}j^{r_-}),$$


and
\begin{align*}
P=&(2^{r_+}3^{r_+}9^{l_-})(3^{l_+}2^{l_+}9^{r_-})(1^{r_+}9^{r_+}4^{r_+})(1^{l_+}4^{l_+}9^{l_+})(6^{r_-}2^{r_-}7^{r_-})(6^{l_-}7^{l_-}2^{l_-}) \\ &(6^{r_+}1^{l_-}5^{l_-})(6^{l_+}5^{r_-}1^{r_-})(7^{r_+}8^{r_+}3^{r_+})(7^{l_+}3^{r_-}8^{l_+})(5^{r_+}4^{l_-}8^{l_-})(5^{l_+}8^{r_-}4^{r_-}).
\end{align*}

An exterior permutation for the embedding of $G$ is  $$Ext_{R, U}=(1432),$$ and an interior permutation is as the following $$Int_{R, U}=(14)(32).$$ Thus, $$Ext_{R, U}\circ Int_{R, U}=(13)(2)(4),$$ namely, the number of faces containing the edges of $\partial(U)$ is $3.$

\end{example}

It's easy to see that if $U=\{u\}$, then $Int_{R, u}$ is the counter-clockwise rotation of the edges which incident with $u.$ Furthermore, the following property follows..

\begin{lemma}\label{indep}Let $U=\{u_1,u_2,\ldots,u_k\}$ be an independent set of vertices of a connected signed graph $\Sigma,$ then, for any embedding of $\Sigma,$ $$Int_{R, U}=R^1R^2\cdots R^k,$$ where $R^i$ is the rotation of the edges which incident with $u_i$ for $i=1,2,\ldots,k.$
\end{lemma}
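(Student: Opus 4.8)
The plan is to reduce the general statement for an independent set $U=\{u_1,\ldots,u_k\}$ to the already-established single-vertex fact, namely that $Int_{R,u}$ is the counter-clockwise rotation of the edges incident with $u$. The key structural observation is that because $U$ is an \emph{independent} set, no edge of $G$ has both of its endpoints in $U$; consequently every edge emanating from some $u_i\in U$ terminates at a vertex of $V(G)-U$, so each such edge contributes an oriented arc to $\partial(U)$, and $\partial(U)$ is partitioned according to which $u_i$ an arc emanates from. Writing $\partial(u_i)$ for the arcs emanating from $u_i$, independence gives the disjoint decomposition $\partial(U)=\bigsqcup_{i=1}^{k}\partial(u_i)$, and $R^i$ is by definition the counter-clockwise rotation on $\partial(u_i)$.

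First I would fix an arc $e\in\partial(u_i)$ and trace the walk defining $Int_{R,U}(e)$: starting from $e^{l_{\delta}}$ and following $\gamma[\gamma P]^{\,y}$ until the first return to some $e_t^{r_{\pm}}$ with $e_t\in\partial(U)$. The crucial claim is that this walk, when it first re-enters $\partial(U)$, lands on an arc again emanating from the \emph{same} vertex $u_i$. The reason is exactly the local interpretation already given in the excerpt for $Int$: the interior permutation follows the boundary of the face $F_e$ on the side adjacent to $u_i$, and since $U$ is independent the portion of the face walk between two consecutive $\partial(U)$-arcs emanating from $u_i$ stays in the angular sectors at $u_i$ (it crosses only edges leading \emph{away} from $u_i$ to vertices outside $U$, never to another vertex of $U$). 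Hence $Int_{R,U}$ maps $\partial(u_i)$ into itself for each $i$, so it splits as a product of permutations supported on the disjoint blocks $\partial(u_i)$.

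Next I would identify the restriction of $Int_{R,U}$ to $\partial(u_i)$ with $R^i$. Here I invoke the single-vertex statement directly: collapsing the independent set by contracting $U\setminus\{u_i\}$ (as justified in the proof of Theorem~\ref{Yc18}, contracting leaves the embedding on the same surface) reduces the computation of $Int_{R,U}$ restricted to $\partial(u_i)$ to the computation of $Int_{R,u_i}$ for the single vertex $u_i$, which the excerpt already asserts equals the counter-clockwise rotation $R^i$ of the edges incident with $u_i$. Combining this with the block decomposition of the previous paragraph yields $Int_{R,U}=R^1R^2\cdots R^k$, where the factors commute and act on disjoint supports. I would also note that the hypothesis $\sigma(e)=1$ for $e\in\partial(U)$, which is required for $Int_{R,U}$ to be defined as in Theorem~\ref{Yc18}, is automatic here or may be arranged by switches without changing the rotations $R^i$.

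The main obstacle I anticipate is making the claim in the second paragraph fully rigorous: one must verify that the face-boundary walk emanating at $u_i$ cannot wander to an arc emanating from a different $u_j$ before returning to $u_i$. This is where independence is essential and where a careful bookkeeping of the symbols $e^{l_{\pm}},e^{r_{\pm}}$ under $\gamma$ and $P$ is needed, rather than a purely pictorial argument. I expect this to follow from the same minimality-of-exponent reasoning used to prove injectivity of $Ext_{R,U}$ in the excerpt, adapted to $Int_{R,U}$, together with the observation that the only arcs of $\partial(U)$ encountered while circling $u_i$ are precisely those incident to $u_i$. Once that localization is secured, the rest is routine identification with the single-vertex case.
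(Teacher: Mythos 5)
Your high-level skeleton --- decompose $\partial(U)$ into the stars $\partial(u_i)$, show $Int_{R,U}$ preserves each star, and identify each block with $R^i$ via the single-vertex fact --- is the right one, and indeed it is all the paper intends: the paper offers no proof of this lemma at all, presenting it as an immediate consequence of the observation that $Int_{R,u}$ is the rotation at $u$. However, your write-up has a genuine gap, which you yourself flag: the block-preservation claim of your second paragraph is never actually proved, and neither of the two tools you propose for proving it would work. First, ``contracting $U\setminus\{u_i\}$'' is vacuous here: $U$ is an independent set, so there are no edges inside $U$ to shrink; the contraction device quoted from the discussion before Theorem~\ref{Yc18} collapses a vertex set along its internal edges, which an independent set does not have. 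Moreover, even granting some collapse, identifying $Int_{R,U}$ restricted to $\partial(u_i)$ with $Int_{R,u_i}$ is not automatic, because the two walks have different stopping sets ($\partial(U)$ versus $\partial(u_i)$), so a priori the first could terminate strictly earlier than the second. Second, the minimality-of-exponent argument used for the injectivity of $Ext_{R,U}$ addresses a different question (that $Ext$ is well defined and injective); it says nothing about where the $Int$-walk first re-enters $\partial(U)$.

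The missing idea is much simpler, and it is exactly where independence enters. Let $e\in\partial(u_i)$; by the paper's convention the initial (lower, $+$) end of every arc of $\partial(U)$ lies at the vertex of $U$ it emanates from, so the walk defining $Int_{R,U}(e)$ starts at $e^{l_+}$ and its first step is a corner step at $u_i$: $[\gamma P](e^{l_+})=\gamma\bigl(P(e^{l_+})\bigr)$, where $P$ acts by the bi-rotation at $u_i$ and sends $e^{l_+}$ to $f^{l_+}$ for $f$ the edge adjacent to $e$ in the rotation at $u_i$, and then $\gamma(f^{l_+})=f^{r_-}$. Because $U$ is independent, \emph{every} edge incident with $u_i$ --- in particular $f$ --- runs from $u_i$ to a vertex outside $U$ and hence is an arc of $\partial(U)$. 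So the stopping condition is already met at the first step, i.e.\ $y(e)=1$, and $Int_{R,U}(e)=f=R^i(e)$. This single computation proves block preservation and identifies the restriction with $R^i$ simultaneously, with no contraction and no reduction to the single-vertex case; the ``wandering'' you worry about cannot occur because the walk never gets past its first step. (If instead $U$ contained an edge $u_iu_j$, that edge would not belong to $\partial(U)$, the walk could pass through it and continue around $u_j$, and the lemma would fail --- that is the precise role of independence.) A minor further slip: your remark that the hypothesis $\sigma(e)=1$ on $\partial(U)$ can be arranged by switches ``without changing the rotations $R^i$'' is not right as stated, since a switch at a vertex reverses the rotation there; one must switch only at vertices outside $U$, or account for the reversal.
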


\section{ the average Euler-genus of the vertex-amalgamation
of signed graphs}

Let $H$ be a subgraph of $G$, the \textit{induced rotation system} $R$ of $G$ on $H$ is obtained by deleting all edges of $G-H$ from the rotation system $R.$

For $k\geq 2$, let $\Sigma^i$ ($1\leq i\leq k$) be a set of disjoint signed graphs, each $\Sigma^i$ contains a common set
$U=\{u_1,u_2,\ldots,u_l\}$ of vertices. Then we denote the \textit{vertex-amalgamation} of the signed graphs  $\Sigma^1,\Sigma^2,\ldots,\Sigma^k$ over $U$ by $\Sigma=\bigcup_{U}\Sigma^i$ as the signed graph obtained by identifying  distinct copies of $U.$ For minimum genus and maximum genus of vertex-amalgamation of graphs, see \cite{Alp73,Arc86a,Arc86b,CGMT20,DGH81,Sta82}, etc.

For $d\geq 0,$ the \textit{harmonic sum} $h_d$ is given by  $$h_0=0,h_1=1$$ and $$h_d=1+1+\frac{1}{2}+\cdots+\frac{1}{d-1}$$ for $d\geq 2.$

 The following theorem was proved in \cite{Sta91a}.
\begin{thm}\cite{Sta91a}\label{stahl}
Let $(P, \Pi)$ be a permutation-partition pair such that $\Pi =
(\Pi_1, \Pi_2, \ldots, \Pi_k)$ and $d_i=|\Pi_i|$ for $i=1,2,\ldots,k$.  If $\mu$ denotes the mean
of the number of regions in the random embedding of $(P, \Pi)$, then $\mu(P, \Pi)\leq\dsum_{i=1}^{k}h_{d_i}.$
\end{thm}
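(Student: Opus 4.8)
The plan is to bound the mean number of regions $\mu(P,\Pi)$ by induction on the total number of symbols, decomposing the partition one block at a time and tracking how a single amalgamation step can increase the region count. The key conceptual tool is the counting theorem (Theorem \ref{Yc18}) together with the observation that, for a single block $\Pi_i$ of size $d_i$, the interior permutation is essentially the rotation at the corresponding vertex, so the local face structure around that block is governed by products of permutations whose orbit counts we can control in expectation.

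**First I would** set up the random model precisely: a random embedding of $(P,\Pi)$ is obtained by choosing, independently and uniformly, a cyclic rotation for each block $\Pi_i$, giving the product $\prod_i (d_i-1)!$ equally likely outcomes. The strategy is to add the blocks one at a time and estimate the expected increment in the number of regions when the block $\Pi_i$ (of size $d_i$) is inserted. The main lemma I would aim for is that inserting a single block of size $d$ into an existing configuration raises the expected number of regions by at most $h_d$; summing these increments over $i=1,\dots,k$ then yields $\mu(P,\Pi)\le\sum_{i=1}^k h_{d_i}$. To prove the single-block bound, I would condition on everything except the rotation chosen at the new block and compute the expected number of regions over the uniform choice of that one cyclic rotation. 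Here the harmonic-sum structure should emerge exactly as in the classical average-genus computations of Gross--Furst: adding the $j$-th edge-end into a partial rotation either merges two face-tracings into one or splits one into two, and a careful accounting of the merge/split probabilities over a uniformly random insertion position produces the telescoping sum $1+1+\tfrac12+\cdots+\tfrac1{d-1}$.

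**The hard part will be** making the merge/split analysis rigorous in the nonorientable (signed) setting, where the twist-indicator $\sigma$ interacts with the face-tracing and a single insertion can behave differently than in the oriented case. Specifically, I expect the delicate step to be verifying that each incremental insertion contributes at most the appropriate harmonic term $\tfrac1{j}$ to the expected region count, rather than more; the inequality (as opposed to equality) in Theorem \ref{stahl} reflects that twists can only reduce, never increase, the region count relative to the all-positive oriented case. I would handle this by comparing the signed face-tracing to the purely orientable one and arguing that introducing a negative sign on an edge can merge two distinct face-orbits but can never split a single orbit into two, so the expected number of regions in the signed model is dominated by that of the oriented model, where the exact harmonic-sum identity is available. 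This monotonicity-under-twisting observation, combined with the per-block harmonic bound, is what converts the classical equality into the desired upper bound $\mu(P,\Pi)\le\sum_{i=1}^k h_{d_i}$.
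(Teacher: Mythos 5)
First, a point of reference: the paper never proves Theorem \ref{stahl} at all. It is quoted, with citation, from Stahl's note \cite{Sta91a} (\emph{An upper bound for the average number of regions}) and used as a black box in Section 3, so there is no internal proof to compare against. Judged on its own merits, the first two paragraphs of your plan do follow the standard route, which is essentially Stahl's published one: build the random embedding block by block, and bound the expected increment in the number of regions contributed by a block of size $d$ by $h_d$, via an insertion (merge/split) analysis of the product of the fixed permutation with a randomly grown cyclic permutation.

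The genuine gap is your third paragraph, which you yourself present as the crux. The theorem has nothing to do with signs or twists: a permutation-partition pair $(P,\Pi)$ consists of an \emph{arbitrary fixed} permutation $P$ and a partition $\Pi$; the randomness is only over the cyclic orderings of the blocks, and the regions are the orbits of the resulting product. All of the signed, nonorientable content of the present paper is absorbed into the fixed permutation $P=Ext_{(R,U)}$ at the moment the theorem is \emph{applied} in Section 3; the theorem itself is sign-free, which is exactly why it extends to signed graphs with no modification. Consequently there is no ``signed versus oriented'' comparison to make inside this proof, and the inequality does not arise from any monotonicity under twisting; it arises from the merge/split accounting itself. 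Indeed, even in the purely oriented setting the harmonic sum is only an upper bound, not an identity: take $k=1$ and $P$ the identity on a single block of size $d\ge 2$, where every embedding has exactly one region, so $\mu=1<h_d$. Worse, the monotonicity lemma you propose is false. Changing the sign of an edge can split a face as well as merge two: the $3$-cycle with one negative edge embeds in $N_1$ with a single face, and negating a \emph{second} edge produces a balanced signed triangle whose embedding is switch-equivalent to the planar one, with two faces. So introducing a twist split one face-orbit into two. As written, your plan hinges on a lemma that is both unnecessary and wrong; what the proof actually requires is to delete that step and carry out rigorously the per-block insertion analysis of your second paragraph, which is precisely the content of Stahl's lemma in \cite{Sta91a}.
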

The following theorem generalizes Stahl's bound \cite{Sta95} for the average Euler-genus of the vertex-amalgamation of graphs to signed graphs. The ideas are the same, we give a detailed proof here for completeness.
 \begin{thm}Let $\Sigma^i$, $i = 1,2, \ldots, k,$  be connected signed graphs, and let
$U=\{u_1,u_2,\ldots, u_l\}$ be an independent set of vertices of  $\Sigma^i$ and  $\Sigma=\bigcup_{U}\Sigma_i.$  Suppose that
$$d_{\Sigma}(u_j) = d_j$$ and $$d_{\Sigma^i}(u_j) = d_{i,j},\ i = 1, \ldots, k, j = 1,\ldots, l.$$ Then
\begin{align*}\label{bound}
-\dsum_{i=1}^{l}h_{d_{i}}  \leq \gamma_{avg}^E(\Sigma)- \dsum_{i=1}^{k}\gamma_{avg}^E(\Sigma^i)-(k-1)(l-2)\leq \dsum_{i=1}^k\dsum_{j=1}^{l}h_{d_{i,j}}.
\end{align*}
\end{thm}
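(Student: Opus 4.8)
The plan is to pass to Euler's formula, reduce the whole statement to a comparison of \emph{average numbers of faces}, and then isolate the faces that meet $U$, which are exactly the ones controlled by Theorem \ref{Yc18} and Theorem \ref{stahl}. For a fixed embedding, $\gamma^E=2-n+m-r$ with $n,m,r$ the numbers of vertices, edges and faces; averaging over rotation systems and using that $n,m$ are constant gives $\gamma_{avg}^E(\Sigma)=2-n(\Sigma)+m(\Sigma)-\bar r(\Sigma)$, where $\bar r$ denotes the average number of faces. Since $U$ is independent in each $\Sigma^i$, the amalgamation satisfies $n(\Sigma)=\dsum_{i=1}^k n(\Sigma^i)-(k-1)l$ and $m(\Sigma)=\dsum_{i=1}^k m(\Sigma^i)$. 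Substituting these into the Euler identities for $\Sigma$ and for each $\Sigma^i$, the vertex/edge bookkeeping cancels the constant $(k-1)(l-2)$ exactly and leaves
\[
\gamma_{avg}^E(\Sigma)-\dsum_{i=1}^k\gamma_{avg}^E(\Sigma^i)-(k-1)(l-2)=\dsum_{i=1}^k\bar r(\Sigma^i)-\bar r(\Sigma).
\]
Thus the theorem reduces to sandwiching $\dsum_i \bar r(\Sigma^i)-\bar r(\Sigma)$ between $-\dsum_j h_{d_j}$ and $\dsum_i\dsum_j h_{d_{i,j}}$.

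Next I would split faces. For a fixed rotation system $R$ of $\Sigma$, call a face \emph{shared} if its boundary meets $U$ and \emph{private} otherwise; by Theorem \ref{Yc18} the number of shared faces is $\rho(\Sigma):=||Ext_{R,U}\circ Int_{R,U}||$ (after a switching, which leaves $\gamma_{avg}^E$ unchanged because the Euler-genus is a switching invariant, I may assume $\sigma\equiv 1$ on $\partial(U)$ so the theorem applies). Restricting the rotation at each $u_j$ to the edges of a single $\Sigma^i$ yields the induced embedding of $\Sigma^i$, whose shared faces number $\rho_i:=||Ext_{R^i,U}\circ Int_{R^i,U}||$. Because the exterior of $U$ is the disjoint union of the $k$ parts, every private face of $\Sigma$ lies in one part and coincides there with a private face of the induced $\Sigma^i$, and conversely. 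Hence, for each fixed $R$,
\[
\dsum_{i=1}^k r(\Sigma^i_{\mathrm{ind}})-r(\Sigma)=\dsum_{i=1}^k\rho_i-\rho(\Sigma).
\]
A uniform rotation at $u_j$ projects to a uniform rotation on the $\Sigma^i$-edges at $u_j$, so each induced $\Sigma^i$ is itself a uniform random embedding of $\Sigma^i$; taking expectations gives $\dsum_i\bar r(\Sigma^i)-\bar r(\Sigma)=\dsum_i E[\rho_i]-E[\rho(\Sigma)]$.

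Third, I would bound $E[\rho(\Sigma)]$ and the $E[\rho_i]$ via Theorem \ref{stahl}. The key observation is that $Ext_{R,U}$ is computed by tracing face boundaries entirely in the exterior of $U$, so it depends only on the rotations at vertices outside $U$, whereas by Lemma \ref{indep} $Int_{R,U}=R^1\cdots R^l$ depends only on the rotations at the vertices of $U$; these two families of rotations are independent. Conditioning on $Ext_{R,U}=P$, the interior permutation is exactly the random cyclic ordering of the blocks of the permutation-partition pair $(P,\Pi)$, where $\Pi$ has blocks of sizes $d_1,\dots,d_l$ (the edge-sets at the $u_j$). Therefore $E[\rho(\Sigma)\mid Ext_{R,U}=P]=\mu(P,\Pi)\le\dsum_{j=1}^l h_{d_j}$ by Theorem \ref{stahl}, and averaging over $P$ gives $E[\rho(\Sigma)]\le\dsum_{j=1}^l h_{d_j}$. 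The same argument applied inside each $\Sigma^i$ (block sizes $d_{i,j}$) gives $E[\rho_i]\le\dsum_{j=1}^l h_{d_{i,j}}$.

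Finally, since $\rho(\Sigma)\ge 0$ and each $\rho_i\ge 0$, I obtain the lower bound $\dsum_i E[\rho_i]-E[\rho(\Sigma)]\ge -E[\rho(\Sigma)]\ge -\dsum_j h_{d_j}$ and the upper bound $\dsum_i E[\rho_i]-E[\rho(\Sigma)]\le\dsum_i E[\rho_i]\le\dsum_i\dsum_j h_{d_{i,j}}$, which are precisely the two claimed inequalities. The main obstacle, in my view, is the middle step: proving cleanly that the private faces of $\Sigma$ are in bijection with the private faces of the induced embeddings, so that the Euler-characteristic difference collapses to $\dsum_i\rho_i-\rho(\Sigma)$, together with establishing the independence of $Ext_{R,U}$ and $Int_{R,U}$ that makes the conditional application of Stahl's permutation-partition bound legitimate. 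The sign hypothesis of Theorem \ref{Yc18} must also be dispatched by the switching argument noted above.
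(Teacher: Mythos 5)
Your proposal follows the paper's own proof essentially step for step: the same Euler-formula reduction converting the genus statement into a two-sided bound on $\sum_i \bar r(\Sigma^i)-\bar r(\Sigma)$ (the paper leaves the vertex/edge bookkeeping that produces $(k-1)(l-2)$ implicit, but it is the same computation), the same dichotomy between faces meeting $\partial(U)$ (counted by Theorem \ref{Yc18}) and private faces (which biject across the amalgamation; this is the paper's identity $|F|=\sum_i|F^i|$), the same application of Lemma \ref{indep} and Theorem \ref{stahl} to the permutation-partition pair whose permutation is $Ext_{R,U}$ and whose blocks are the edge sets at the vertices of $U$ (of sizes $d_j$, respectively $d_{i,j}$ inside each $\Sigma^i$), and the same use of nonnegativity of the orbit counts to split off the two one-sided inequalities. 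Your explicit conditioning argument --- $Ext_{R,U}$ is determined by the rotations outside $U$, while $Int_{R,U}$ is the product of uniform independent rotations at $U$ --- is exactly what the paper compresses into the sentence ``this bound is independent of $Ext_{(R,U)}$ for any $R$ of $G$.''

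The one point where you diverge is the sign hypothesis of Theorem \ref{Yc18}, and there your fix does not work as stated. Switching does preserve $\gamma^E_{avg}$, but it cannot in general make every edge of $\partial(U)$ positive: the signs of cycles are switching invariants, so any cycle alternating between $U$ and $V\setminus U$ that is negative in $\Sigma$ forces some edge of $\partial(U)$ to remain negative in every equivalent signature. Concretely, amalgamate two signed paths $u_1w_1u_2$ and $u_1w_2u_2$ over $U=\{u_1,u_2\}$ so that the resulting quadrilateral carries exactly one negative edge; then $\partial(U)$ is the entire edge set and no switching makes it all-positive. To be fair, this is a gap you inherited honestly: the paper's proof applies Theorem \ref{Yc18} to an arbitrary signature with no justification whatsoever on this point, so your treatment is, if anything, the more careful one --- you at least identified that something must be said. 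But genuinely closing the gap requires more than switching: one must extend the definitions of $Ext_{R,U}$ and $Int_{R,U}$ (and the counting theorem) to allow twisted edges in $\partial(U)$, or otherwise establish the face-count decomposition directly for arbitrary signatures.
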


 \begin{proof}We denote $\rho_{avg}(\Sigma)$ the average number of faces in the random embedding of $\Sigma$ into $S_i$ with Euler-genus $i.$ From the Euler formula, $$|V(\Sigma)|-|E(\Sigma)|+\rho_{avg}(\Sigma)=2-\gamma_{avg}^E(\Sigma),$$ thus we only need to prove the following formula
\begin{align*}
 -\dsum_{i=1}^k\dsum_{j=1}^{l}h_{d_{i,j}}
 \leq \rho_{avg}(\Sigma)- \dsum_{i=1}^{k}\rho_{avg}(\Sigma^i)\leq \dsum_{i=1}^{k}h_{d_{i}}.\end{align*}

 If $R$ is a rotation system of $G,$  then $(R,\sigma)$ is a signed rotation system  of $\Sigma=(G,\sigma).$ For $1\leq i\leq k$, let $R^i$ be the induced rotation system of $G$ on $G^i$, i.e.,$\Sigma^i=(G^i,R^i).$

 By Theorem \ref{Yc18}, \begin{align*}
 \rho(\Sigma, R) &= ||Ext_{(R, U)}\circ Int_{(R, U)}|| + |F|,\\
 \rho(\Sigma^i, R^i) &= ||Ext_{(R^i, U)}\circ Int_{(R^i, U)}|| + |F^i|,\ \ (1\leq i\leq k).
 \end{align*}
 where $F$ ($F^i$) consists of all the faces of $(R,\sigma)$ ( $(R^i,\sigma)$ )that contain no arcs of $\partial(U).$ Here $Ext_{(R^i, U)}$ and $Int_{(R^i, U)}$ are permutations of $\partial(U)\cap G^i.$

 It's clear that
  \begin{align*}|F|=\dsum_{i=1}^k |F^i|.\end{align*}

From the discussion above, we have
 \begin{eqnarray}\label{equ:1}
\rho(\Sigma,R)- \dsum_{i=1}^{k}\rho(\Sigma^i,R^i)&=&||Ext_{(R, U)}\circ Int_{(R, U)}||-\dsum_{i=1}^{k}||Ext_{(R^i, U)}\circ Int_{(R^i, U)}|| \\
   &\leq& ||Ext_{(R, U)}\circ Int_{(R, U)}||.\notag
 \end{eqnarray}
 
Let $R$ take all elements in $\Omega$ of $G$  with uniform probability $\frac{1}{|\Omega|}$. Then,
\begin{eqnarray*}
\frac{1}{|\Omega|}\dsum_{R\in\Omega}(\rho(\Sigma,R)- \dsum_{i=1}^{k}\rho(\Sigma^i,R^i)) &\leq& \frac{1}{|\Omega|}
\dsum_{R\in\Omega} ||Ext_{(R, U)}\circ Int_{(R, U)}||
 \end{eqnarray*}
which is equivalent to
 \begin{align}
 \rho_{avg}(\Sigma)- \dsum_{i=1}^{k}\rho_{avg}(\Sigma^i)\leq ||Ext_{(R, U)}\circ Int_{(R, U)}||.\end{align}

By Lemma \ref{indep}, $$Int_{R, U}=R^1R^2\cdots R^k,$$
Then we obtain a permutation-partition pair $(P,\Pi)$, where
$$P = Ext_{(R, U)}$$ and $$\Pi = \{{\partial(U)\cap G^i|i=1,2,\ldots,k}\}.$$ Note that $|\partial(U)\cap G^i|=d_{j}$, and by Theorem \ref{stahl}, the expected value of $||Ext_{(R, U)}\circ Int_{(R, U)}||$ is less than or equal to $$\dsum_{i=1}^{k}h_{d_{i}}.$$

\noindent Since this bound is independent of $Ext_{(R, U)}$ for any $R$ of $G$, thus, \begin{align*} \rho_{avg}(\Sigma)- \dsum_{i=1}^{k}\rho_{avg}(\Sigma^i)\leq \dsum_{i=1}^{k}h_{d_{i}}.\end{align*}

We next  prove the other bound,  again by Equation (\ref{equ:1}), it follows that
\begin{align*}
 \rho_{avg}(\Sigma)- \dsum_{i=1}^{k}\rho_{avg}(\Sigma^i)\geq -\dsum_{i=1}^{k}||Ext_{(R^i, U)}\circ Int_{(R^i, U)}||.\end{align*}
Similarly, we have $$||Ext_{(R^i, U)}\circ Int_{(R^i, U)}||\leq \dsum_{j=1}^{l}h_{d_{i,j}}.$$
Thus
\begin{align}
 \rho_{avg}(\Sigma)- \dsum_{i=1}^{k}\rho_{avg}(\Sigma^i)\geq -\dsum_{i=1}^{k}\dsum_{j=1}^{l}h_{d_{i,j}}.\end{align}
The result follows.
\end{proof}



\begin{thebibliography}{10}
\bibitem{Alp73}S.R. Alpert, The genera of amalgamations of graphs,
\textsl{Trans. Amer. Math. Soc.}, {\bf78} (1973),1--39.
\bibitem{Arc86a} D. Archdeacon, The nonorientable genus is additive, \textsl{ J. Graph Theory} \textbf{10} (1986), 363--383.
\bibitem{Arc86b}D. Archdeacon, The orientable genus is nonadditive, \textsl{ J. Graph Theory} \textbf{10} (1986), 385--402.
\bibitem{CG92a}J. Chen and J.L. Gross, Limit points for average genus (I) 3-connected and 2-connected simplicial graphs, {\em J. Combin. Theory Ser. B} {\bf55} (1992), 83--103.

\bibitem{CG92b}J. Chen and J.L. Gross, Limit points for average genus (II) 2-connected non-simplicial graphs,  {\em J. Combin. Theory Ser. B } {\bf 56} (1992), 108--129.

\bibitem{CG93}J. Chen and J.L. Gross, Kuratowski-type theorems for average genus, {\em J. Combin. Theory Ser. B} {\bf 57} (1993), 100--121.


\bibitem{CGR95}
J. Chen, J.L. Gross and R.G. Rieper, Lower
bounds for the average genus, {\em J. Graph Theory},  {\bf 19} (1995), 281--296.


\bibitem{Che10}Y. Chen, Lower bounds for the average genus of a CF-graph, \textsl{Electron. J. Combin.},  \textbf{17(1)} (2010) $\sharp$R150.

\bibitem{Che22}Y. Chen, Permutation-bipartition pairs, 2022.arXiv:2204.04359v3, Apr 2022.
\bibitem{Che22b}Y. Chen, Permutation-bipartition pairs II, 2022+.
\bibitem{CGM18}  Y. Chen, J.L. Gross, and T. Mansour, {On the genus distributions of wheels and of related graphs}, \textsl{Discrete Math.} \textbf{341} (2018) 934-945.
\bibitem{CGMT20}  Y. Chen, J.L. Gross, T. Mansour,  {and T.W. Tucker}, {Recurrences for the genus polynomials of linear sequences of graphs}, \textsl{Math. Slovaca,} \textbf{3} (2020), 505--526.
\bibitem{DGH81}R.W. Decker, H.H. Glover, and J.P. Huneke, The genus of 2-connected graphs,\textsl{ J. Graph Theory}, {\bf5} (1981), 95--102.

\bibitem{GF87} J. L. Gross, and M. L. Furst, Hierarchy for imbedding-distribution invariants of a graph, \textsl{ J. Graph Theory}  \textbf{11} (1987), 205--220.
\bibitem{GKR93}
J.L. Gross, E.W. Klein, and R.G. Rieper,  On the average genus of a graph,
\emph{Graphs and Combinatorics} \textbf{9}(2-4)(1993), 153-162.
\bibitem{GT87} J.L. Gross and T.W. Tucker,
	\textsl{Topological Graph Theory}, Dover, 2001; (original edn. Wiley, 1987).

\bibitem{GMTW16}J. L. Gross, T. Mansour, T. W. Tucker, and D. G-L. Wang, Combinatorial conjectures that imply local log-concavity of graph genus polynomials, \textsl{European Journal of Combinatorics} {\bf52} (2016), 207--222.

\bibitem{LHMMS22}J. C. Loth, K. Halasz, T. Masa\v{r}\'{\i}k, B. Mohar, and R. \v{S}\'{a}mal,  Random 2-cell embeddings of multistars, \textsl{Proceedings of the American Mathematical Society,} to appear, arXiv:2103.05036v2, 2021.

\bibitem{LM22}J. C. Loth, and B. Mohar, Expected number of faces in a random embedding of any graph is at most linear, arXiv:2202.07746v1, 2022.


\bibitem{Liu09}Y.P. Liu, Theory of polyhedra, \textsl{Science press}, Beijing 2008.

\bibitem{L15}S. Lv, The largest demigenus over all signatures on $K_{3,n}$, \textsl{Graphs Combin.}, {\bf31(1)} (2015), 169-181.

\bibitem{LY18}S. Lv and Z. Yuan, The smallest surface that contains all signed graphs on $K_{4,n}$, \textsl{Discrete Math.} {\bf341(1)} (2018), 732-747.

\bibitem{MT01} B. Mohar and C. Thomassen, Graphs on Surfaces, Johns Hopkins Press, 2001.

\bibitem{Sir91a}J. \v{S}ir\'{a}n, Characterization of signed graphs which are cellularly embeddable in no more than one surface, \textsl{Discrete Math.} {\bf94(1)} (1991), 39--44.
\bibitem{Sir91b}J. \v{S}ir\'{a}n,  Duke's theorem does not extend to signed graph embeddings, \textsl{Discrete Math.} {\bf94} (1991), 233--238.
\bibitem{SS91}J. \v{S}ir\'{a}n, and M. \v{S}koviera, Characterization of the maximum genus of a signed graph, \textsl{J. Combin. Theory B} 52(1)(1991), 124--146.

\bibitem{Sta76}S. Stahl, A counting theorem for topological graph theory, Theory and applications of
graphs in Proceedings, Conf. Western Michigan Univ., Kalamazoo, Mich., 1976
(Y. Alavi and D. R. Lick, Rds.), \textsl{Lecture Notes in Math.}
Springer-Verlag, Berlin/New York, \textbf{642} (1976), 534--544.

\bibitem{Sta80} S. Stahl,
	Permutation-partition pairs: A combinatorial generalization of graph embeddings,
	\textsl{Trans.\ Amer.\ Math.\ Soc.} \textbf{259} (1980), 129--145.
\bibitem{Sta82} S. Stahl,
Permutation-partition Pairs II: Bounds on the genus of the amalgamation of graphs, \textsl{Trans.\ Amer.\ Math.\ Soc.} \textbf{271(1)} (1982), 175--182.

\bibitem{Sta91c} S. Stahl,
	Permutation-partition pairs III: Embedding distributions of linear families of graphs,
	\textsl{J. Combin.\,Theory} B \textbf{52} (1991), 191--218.


\bibitem{Sta91a} S. Stahl,
	An upper bound for the average number of regions, \textsl{J. Combin.\,Theory} B \textbf{52} (1991), 219--221.

\bibitem{Sta91b} S. Stahl, Region distributions of some small diameter graphs, \textsl{Discrete Math.} {\bf89} (1991), 281--299.

\bibitem{Sta95} S. Stahl, Bounds for the average genus of the vertex-amalgamation of graphs, \textsl{ Discrete Math.} \textbf{142} (1-3)(1995), 235--245.
\bibitem{Sta95b}S. Stahl, On the average genus of the random graph, \textsl{J. Graph Theory}, \textbf{20(1)} (1995), 1--18.

\bibitem{Tut84}W.T. Tutte, Graph theory, \textsl{Cambridge University Press}, 2001.

\bibitem{Zas92} T. Zaslavsky, Orientation embedding of signed graphs, \textsl{Journal of Graph Theory,} {\bf16(5)} (1992), 399--422.
 \bibitem{Zas93}T. Zaslavsky, The projective-planar signed graphs, \textsl{Discrete Math.} {\bf113} (1993), 223--247.
\bibitem{Zas96a}T. Zaslavsky, The order upper bound on parity embedding of a graph, \textsl{J. Combin. Theory Ser B } {\bf68} (1996), 149--160.
\bibitem{Zas97b}T. Zaslavsky, The largest parity demigenus of a simple graph, \textsl{J. Combin. Theory Ser B } {\bf70} (1997), 325--345.



\end{thebibliography}
\end{document}